\newtheorem{proposition}{Proposition}
\newtheorem{proof}{Proof}
\begin{document}

\title{Devolutionary Genetic Algorithms with Application to the Minimum Labeling Steiner Tree Problem}


\author{Nassim Dehouche\\
Mahidol University International College \\
              Business Administration Division\\
             \small{nassim.deh@mahidol.ac.th}\\ }


\date{}
\maketitle


\begin{abstract}
This paper\footnote{Cite as Dehouche, N., 2018, Devolutionary genetic algorithms with application to the minimum labeling Steiner tree problem. Evolving Systems, 9(2), pp 157–168.\\ 

This is a post-peer-review, pre-copyedit version of an article published in Evolving Systems. The final authenticated version is available online at: \url{https://link.springer.com/article/10.1007/s12530-017-9182-z.}} characterizes and discusses devolutionary genetic algorithms and evaluates their performances in solving the Minimum Labeling Steiner Tree (MLST) problem. We define devolutionary algorithms as the process of reaching a feasible solution by devolving a population of super-optimal unfeasible solutions over time. We claim that distinguishing them from the widely used evolutionary algorithms is relevant. The most important distinction lies in the fact that in the former type of processes, the value function decreases over successive generation of solutions, thus providing a natural stopping condition for the computation process. We show how classical evolutionary concepts, such as crossing, mutation and fitness can be adapted to aim at reaching an optimal or close-to-optimal solution among the first generations of feasible solutions. We additionally introduce a novel integer linear programming formulation of the MLST problem and a valid constraint used for speeding up the devolutionary process. Finally, we conduct an experiment comparing the performances of devolutionary algorithms to those of state of the art approaches used for solving randomly generated instances of the MLST problem. Results of this experiment support the use of devolutionary algorithms for the MLST problem and their development for other NP-hard combinatorial optimization problems.

\end{abstract}

\textbf{Keywords:} Hybrid Meta-heuristics, Integer Linear Programming, Evolutionary computation, Minimum Labelling Steiner tree problem.


\section{Introduction}
\label{S:1}

In biology, the notion that there exist a preferred hierarchy of structure and function among organisms is widely accepted as a fallacy \cite{SA}. Thus, the idea of the devolution of species is considered indiscernible from their evolution. However, in evolutionary computer science, the latter hierarchy is necessary and generally exists in the form of a value function. Evolutionary genetic algorithms constitute a class of meta-heuristic approaches that aim at reaching a close-to-optimal solution to a combinatorial optimization problem, through the improvement of the value function associated with a population of sub-optimal candidate solutions, over time. Good evolutionary genetic algorithms are expected to reach a close-to-optimal solution in a reduced number of generations, therefore in most of these algorithms the value function is improved in each new generation. Moreover, all generations of solutions are feasible and sub-optimal. Therefore, the concern lies in designing a process in which value increases fast enough to reach a satisfying solution in a reduced computation time. However, defining stopping conditions for evolutionary genetic algorithms can be a tedious task \cite{STOP}, such conditions often requiring to be defined on a case by case basis \cite{STOP2}, when they are not arbitrarily defined by the computation time available.\\ 

In numerous combinatorial optimization problems, generating super-optimal unfeasible solutions is a relatively easy task (e.g. through constraints relaxation). We call devolutionary algorithm a computation process in which successive generations of solutions are unfeasible super-optimal solutions and their value typically decreases over time. The goal in such processes is increasing the feasibility of successive generations over time, while trying to limit the decrease in value. Therefore, the main aim in designing devolutionary meta-heuristics lies in reaching an optimal or close-to-optimal solution among the first generations of feasible solutions. \\

In addition to providing natural stopping conditions (i.e. reaching one or a certain number of feasible solutions), there is an intuitive justification to the use of this devolutionary approach for bypassing the issue of premature convergence to local-optima, that is so prevalent in genetic algorithms \cite{local}. Indeed, in evolutionary algorithms, the absolute (i.e. global) value of the properties possessed by successive generations of solution that get passed onto offspring remains unknown all throughout the process. All generated solutions can only be good relative to the sample of solutions generated thus far, whereas with the devolutionary approach, super-optimal solutions in the initial population are expected to possess absolutely good structural properties for a given problem (e.g. a small number of colors for coloring problems), although they are not adequately adapted to this problem. The computation process is oriented in such a way as to pass on these properties to future generations while trying to improve their adaptability to the problem at hand. One can expect withal that generating the initial population of solution would be computationally more demanding in the latter type of processes than in the former, although the design of devolutionary algorithms can feed from advances in linear programming and other methods of generating super-optimal solutions. Another apparent limitation of the devolutionary approach is that the premise of using super-optimal solutions only seems applicable to single-objective combinatorial optimization problems, unlike the evolutionary approach that can be more generally used as a search procedure for a wider variety of tasks, such as multi-objective optimization \cite{111} or machine learning \cite{ML}.\\ 

This paper is an initial attempt to evaluate the pertinence of developing devolutionary genetic algorithms for hard combinatorial optimization problems. We choose to focus on a class of edge coloring problems known as the Minimum Labeling Steiner Tree (MLST) problem, defined in section \ref{problem}. This variant of the Steiner Tree problem presents the advantage of possessing some well-performing exact and heuristic solving methods, a brief review of which can be found in section \ref{review}. However, to the best of our knowledge, linear-programming based approaches have yet to be tested for the MLST. Thus, there exist a theoretical interest in developing such methods and the body of existing methods offers good opportunities for gauging their performances. In this regard, we propose an integer linear programming formulation of the problem in section \ref{formulation}. and describe the proposed devolutionary genetic algorithm in section \ref{proposed}, which also introduces a new class of valid inequalities that we use to solve relaxations of the MLST problem. We compare this algorithm with the aforementioned exact and heuristic methods in section \ref{computation}. Finally, we draw conclusions regarding the results of this experiment and perspective of further development of the proposed approach in section \ref{conclusion}.

\section{Problem statement}
\label{problem}
Given a graph with labeled (or colored) edges, one seeks a spanning tree covering a subset of nodes, known as terminals or basic nodes, whose edges have the least number of distinct labels (or colors). Formally, given $G = (V,E,L)$ a labeled, connected, undirected graph, where $V$ is the set of nodes, $E$ is the set of edges, that are labeled on the set $L$ of labels (or colors). Let $Q$ be the set of nodes that must be connected in a feasible solution. The objective is to find a spanning tree $T$ of the sub-graph connecting all the terminals $Q$ such that the number of colors used by $T$ is minimized. This problem has numerous real-world applications. For example, a multi-modal transportation networks is represented by a graph where each edge is assigned a color, denoting a different company managing that edge, and each node represents a different location. Thus, it is desirable to provide a complete service between a basic set of locations, without cycles, using the minimum number of companies, in order to minimize the costs.

\section{Related work}
\label{review}
The MLST problem is an extension of the well-studied Steiner tree problem \cite{7}, and of the minimum labeling tree spanning problem \cite{8}, which are known to belong to the class of NP-hard combinatorial optimization problems, and for which (evolutionary) genetic algorithms have proved to be successful meta-heuristic solving approaches \cite{9,10}. Just like the two problems it extends, the MLST problem belongs to the class of NP-hard problems. Thus, its most successful solving approach currently known relies on the use of heuristics and meta-heuristics. The problem was first considered in \cite{1} and a heuristic approach known as the Pilot Method, as well as meta-heuristic approaches, namely variable neighbourhood search \cite{2} and Particle Swarm Optimization \cite{6}, were successfully implemented for its resolution. The Pilot Method was found to obtain the best results compared to some meta-heuristic approaches (Tabu Search, Simulated Annealing, and Variable Neighbourhood Search). One can observe that the existing body of work on the MLST problem is exclusively based on graph-theoretic formulations of the problem. To the best of our knowledge, integer linear programming formulations, and their relaxations, have yet to be explored as a possible framework in which to develop meta-heuristic solving approaches to the problem, despite the fact that this research direction, known as Hybrid Meta-heuristics \cite{11} in the literature, has been fruitful for the Steiner tree problem \cite{12}.\\ 

The proposed devolutionary approach falls into the wider category of memetic algorithms, as defined in \cite{121} as \textit{``an evolutionary metaheuristic that can be viewed as a hybrid genetic algorithm combined with some kinds of local search''}, and can be more specifically classified as a hybrid nature-inspired algorithm \cite{122}. This type of algorithms have been previously used, with success, for optimization problems \cite{123} in general, and for integer linear programming problems \cite{124} in particular. Thus, a secondary novel aspect of the present research, in addition to devolutionary computation, lies in the use of a hybrid meta-heuristic approach based on an integer linear program formulation of the MLST problem and the introduction of a new class of valid constraints for Steiner problems. A key focus in memetic computing being the inclusion of problem knowledge into the solver technique \cite{125}, we will aim at effectively making use of these constraints in the algorithm, to guide the search procedure and fasten convergence.

\section{Integer Linear Programming Formulation}
\label{formulation}
Similarly to Beasley's formulation of the Steiner tree problem \cite{17}, the MLST problem can be stated as finding a minimum labeling spanning tree $T'$ in a modified network $G' = (V',E',L)$, generated by adding a new node $v'$, and connecting it using ``colorless'' edges to all nodes in $V \backslash Q$ and to an arbitrarily fixed terminal $q_0$, with additional constraints stating that every node in $V \backslash Q$ that is adjacent to $v'$ in $T'$, must be of degree one. As the adjective suggests, we consider ``colorless'' edges to be edges whose labels are not counted, when evaluating a tree they are part of. \\

Let edge variables $x_e \in \{0, 1\}, \forall e \in E'$ and label variables $y_l \in \{0, 1\}, \forall l \in L$ respectively indicate whether an edge $e$ and a label $l$ are used in a spanning tree corresponding to a solution to the MLST problem. We denote $X$ the solution-vector constituted of variables $x_e, \forall e \in E'$, $\delta(k)\subseteq E'$ the set of edges that are incident to a node $k, \forall k \in V'$ or, by extension, the set of edges that have exactly one endpoint in a subset of nodes, and $l(e)$ the color of edge $e, \forall e \in E$. Note that it is not necessary to model the labels of edges in $E' \backslash E$, i.e. ``colorless'' edges. \\

A generic formulation of the problem is given by the following integer linear program:
\begin{alignat}{4}
&\min \sum \limits_{l \in L} y_l & \\
\text{s.t. }& X\text{ is a Spanning tree}&  \\
&y_{l(e)} \geq x_e & \forall e \in E \\
&x_{\{v',k\}} + x_{\{k,i\}} \leq 1 &  \forall k \in V \backslash Q, i\in V'\backslash \{v'\}: \{k, i\} \in \delta(k) \\
&x_e \in \{0, 1\} & \forall e \in E' \\
&y_l \in \{0, 1\} & \forall l \in L 
\end{alignat}
In this linear program, the objective function $(1)$ minimizes the number of required labels. The abstract constraint $(2)$ simply states that the resulting solution $X$ constitutes a Steiner tree. There exist numerous ways to make this constraint explicit. In the following, we assume that it is replaced by the two following types of inequalities, where $E(W)$ is the set of edges with both endpoints in a subset of nodes $W \subset V'$
\begin{alignat}{3} 
\sum \limits_{e \in E'} x_e = n  &  &  \\
\sum \limits_{e \in E':e\in E(W)} x_e & \leq |W|-1  \quad  & \Phi \neq W \subset V'  
\end{alignat}
Note that the number of constraints of type $(8)$ is exponential in the number of nodes of the graph. The two main challenges in solving this model are thus the exponential number of constraints and the integer nature of variables.\\

Inequalities $(3)$ ensure that the label variable associated with the label of each edge is equal to 1, if said edge is part of the solution. Inequalities $(4)$ enforce the previously-described degree constraints on nodes from $V \backslash Q$ that are adjacent to $v'$ in $T'$.  Finally, constraints $(5)$ and $(6)$ ensure that all variables are binary.\\

The number of constraints in the linear program can be reduced by replacing inequalities $(3)$ with the following, in which $\left\vert{S_l}\right\vert$ denotes the cardinality of the subset $S_l \subset E$ of edges whose label is $l$: 
\begin{alignat}{3}
 \sum \limits_{e \in E:l(e)=l} x_e \leq min\{\left\vert{S_l}\right\vert, n-1\} \cdot y_l \quad &   \forall l \in L
\end{alignat}

\section{Proposed Devolutionary Approach}
\label{proposed}
The proposed devolutionary approached can be outlined as follows:
\begin{enumerate}[label=\Alph*]
\item Generate a population of super-optimal solutions.
\item Evaluate each individual's fitness and determine population's average fitness.
\item Repeat
\begin{itemize}         
\item Select best-ranking individuals to reproduce
\item Mate at random
\item Apply crossover operator
\item Apply mutation operator
\item Evaluate each individual's fitness
\item Determine population's average fitness
\end{itemize}
Until the desired number of feasible solution is reached, in which case a feasible solution cannot  reproduce anymore. 
\item Choose the best feasible solution.
\end{enumerate}

In the case of the MLST problem, the generation of the initial population, the crossover and mutation operators as well as the evaluation of fitness are performed as follows.

\subsection{Generating a population of initial solutions}
This is performed by relaxing integrity constraints in the integer linear programming formulation of the MLST problem and generating a subset of optimal solutions to the relaxed problem, or alternatively super-optimal solutions to the MLST problem.\\

As stated before the number of inequalities of type (8) is too high to deal with
all of them right from the start of the optimization process. Both the exact method we shall use for comparison and the meta-heuristic method presented here start with a reduced ILP formulation. All the presented inequalities get added except
constraints (8). An iterative process is started that consists in solving this reduced
linear program, separating violated constraints, adding them to the
problem and restarting with solving the enhanced problem. The separation
of violated inequalities, which is not the main focus of this paper, is done using a generic separation library under A Branch-And-CUt System (ABACUS) \cite{ABACUS}. \\

Various super-optimal solutions can be generated by varying the choice of the arbitrary terminal to which $v'$ is connected. For some instances, it can also be fruitful to generate this initial population by using a branching procedure on the values of a small subset of binary variables (e.g. a subset of labels). 
\subsection{Fitness function}
An exploitable property of the present formulation is that if edge variables $x_e, \forall e \in E'$ take an integer value in a solution, label variables $y_l, \forall l \in L$ would also have an integer value, given the structure of constraints of type (3) or (9) and the fact that objective function (1) minimizes the sum of the latter type of binary variables. Thus, one can focus on increasing the number of edge variables that take an integer value to tend towards feasibility. For this reason, the fitness function evaluates the feasibility of a solution $X$ as the number of edge variables $x_e, \forall e \in E'$ that have an integer value in the solution-vector. It is formally calculated as 
$\left\vert\{x_e \in X: x_e \in \{0, 1\} \}\right\vert$. Once a solution in which all edge variables take integer values is reached, this solution would be feasible for the MLST problem.

\subsection{Crossover and mutation}
In this section we introduce a new class of Chv\`atal-Gomory cutting planes.  As opposed to the constraints presented in the previous sections, this class of inequalities is not needed to model the MLST problem as an integer linear program. However, their addition can reduce the solving time needed, by cutting the polyhedron which is already defined by inequalities (2) to (9). After proving the validity of these inequalities, we show how they can be used in the crossover operation, in order to speed up the convergence process. \\

The following example shows the type of fractional solutions that can result from relaxing integrity constraints in the previous integer linear program. In Figure \ref{example}, we consider a sub-graph including four terminals denoted $\{t_1,t_2,t_3, t_4\}$ and two Steiner nodes denoted $\{s_1, s_2\}$, to which a node $v'$ is added and connected to all Steiner nodes and to terminal $t_1$, as per the previously described formulation of the problem. \\

An integer solution, in which variables corresponding to edges $\{s_1,v'\},$  $\{v',t_1\},$  $\{t_1,t_2\},$  $\{s_2,t_2\},$  $\{t_2,t_3\}$ and $\{t_3,t_4\}$ take value $1$, while variables corresponding to all other edges of the sub-graph take value $0$, as represented in Figure \ref{example2}, is feasible and represents a Steiner tree in this sub-graph. We modify this solution by reducing the value of the variable corresponding to edge $\{t_1,t_2\}$ to $0.5$ and assigning another $0.5$ to the variable corresponding to edge $\{s_1,s_2\}$, as represented in Figure \ref{example3}. It is easy to see that this results in a fractional solution that would satisfy constraints (2) to (6) and be feasible for the relaxed problem. \\

In order to cut such solutions off from the polyhedron of solutions to the relaxed problem, we describe a set of constraints that can be imposed on a subset of elementary cycles passing through node $v'$ in network $G'$. Node $v'$ being connected to all Steiner node and to one terminal,  this node is either adjacent to two Steiner nodes, or to one Steiner node and one terminal, in all cycles passing through $v'$. We focus on all elementary cycles passing through $v'$, that are of the former type. It should be noted that for each elementary path between two Steiner nodes in $G$, corresponds such a cycle in $G'$. \\

We first consider non-triangle cycles and then treat the case of triangles separately. Let us denote by $C$ a non-triangle cycle in $G'$ passing through $v'$ which contains two edges $\{v',s_i\}$ and $\{v',s_j\}$, where $s_i$ and $s_j $ are two Steiner nodes in $V \backslash Q$, by $x_{\{s_i,s_j\}}$ the variable corresponding to edge $\{s_i,s_j\}$, and by $\left\vert{C}\right\vert \geq 4$ the length (number of edges) of $C$. We introduce the following inequalities:
\begin{alignat}{3} 
\sum \limits_{e \in C} x_e \leq \left\vert{C}\right\vert -2 - x_{\{s_i,s_j\}} & \forall C \mbox{ a cycle}  : \{v',s_i\},\{v',s_j\} \in C, \{s_i,s_j\} \not \in C,  s_i, s_j \in V \backslash Q 
\end{alignat}

In Figure \ref{example}, one can observe for instance that cycle $v' s_2 t_2 t_3 t_4 s_1 v'$, of length 6, would result in the following inequality: 
$$x_{\{v', s_2\}}+x_{\{s_2, t_2\}}+x_{\{t_2, t_3\}}+x_{\{t_3, t_4\}}+x_{\{t_4, s_1\}}+x_{\{ s_1, v'\}}\leq 4 - x_{\{s_2, s_1\}}$$

\begin{figure}[!htb]
\minipage{0.32\textwidth}
\caption{\label{example} An illustrative graph with four terminals and two Steiner nodes}
\includegraphics[width=\linewidth]{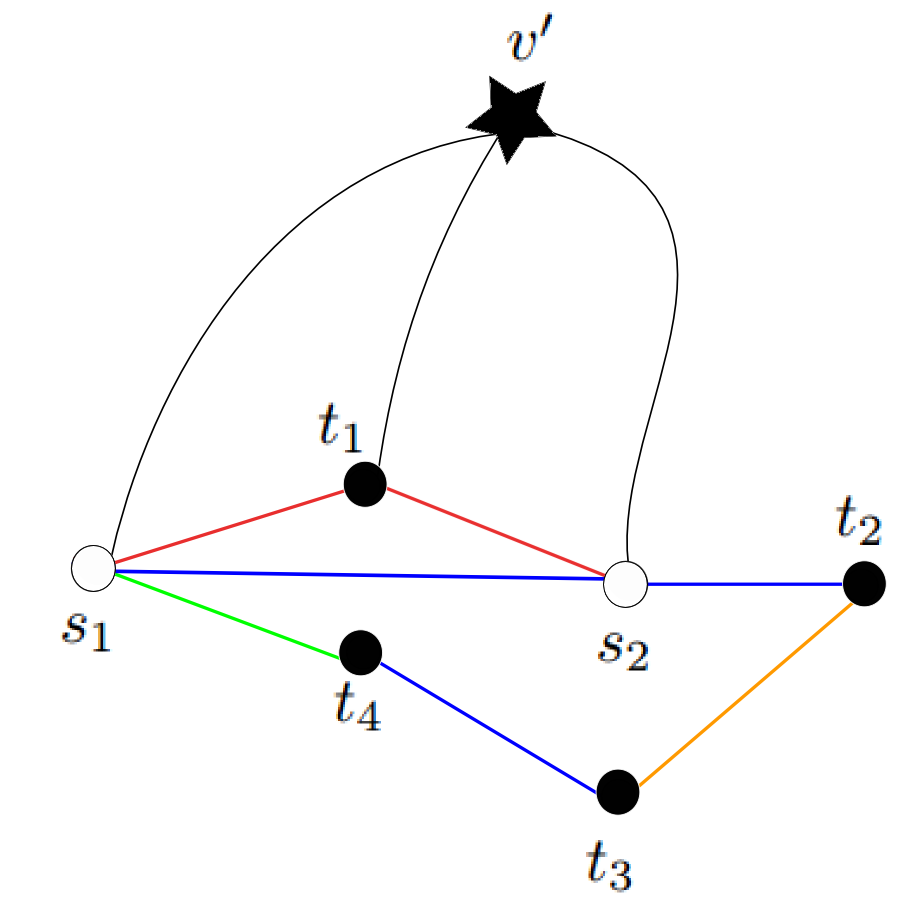}
\endminipage\hfill
\minipage{0.32\textwidth}
\caption{\label{example2} A feasible integer solution preserved by inequality (10)} 
\includegraphics[width=\linewidth]{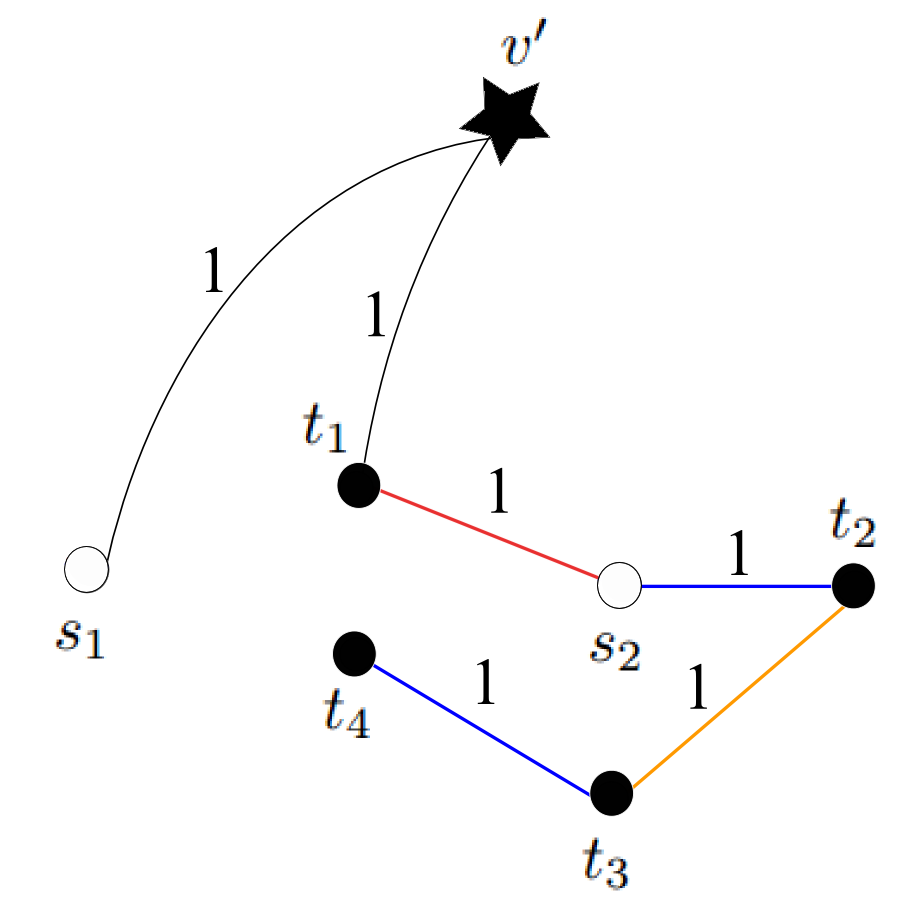}

\endminipage\hfill
\minipage{0.32\textwidth}%
\caption{\label{example3} An unfeasible fractional solution cut off by inequality (10)} 
\includegraphics[width=\linewidth]{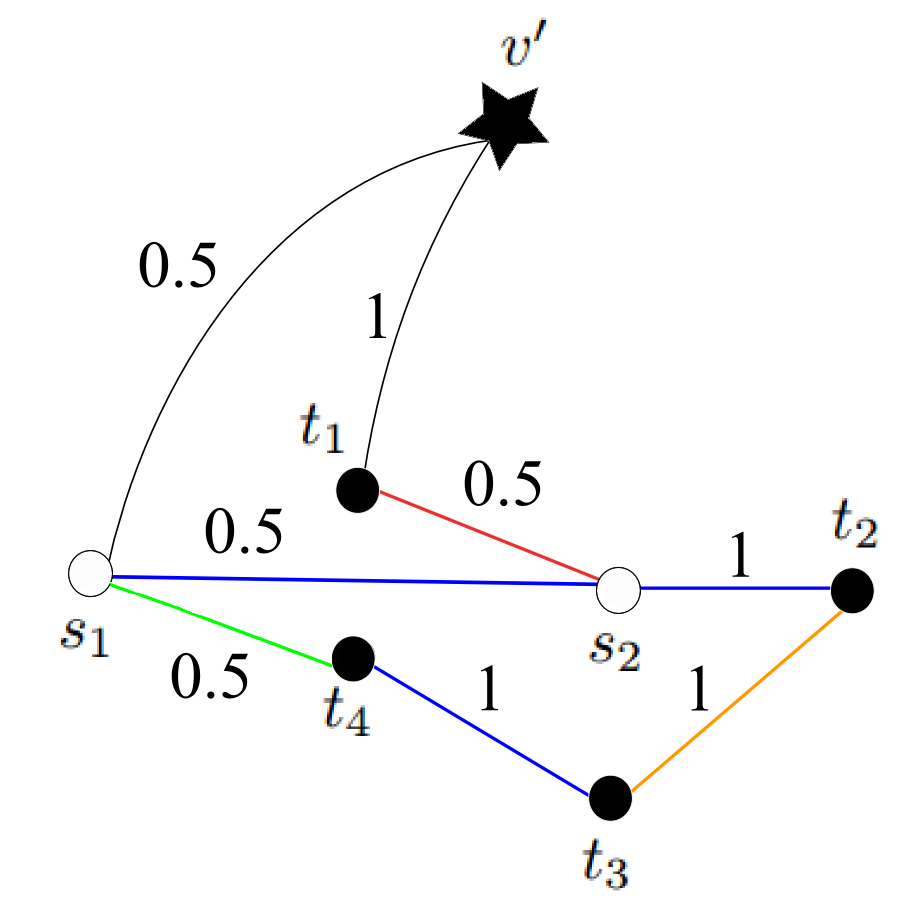}

\endminipage
\end{figure}

When applied to the integer solution represented in Figure \ref{example2}, this inequality would be tightly satisfied (4 $\leq$ 4), while it would not be satisfied (4 $\not\leq$ 3.5) by the fractional solution represented in Figure \ref{example3}.

We extend this results to all solutions in Proposition 1, a proof of which is provided below.

\begin{proposition}
Constraint (10) is a valid inequality for the set of feasible solutions to the MLST problem, defined by constraints (2) to (6).
\end{proposition}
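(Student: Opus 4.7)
My plan is to verify the inequality by case analysis on the values of $x_{\{v', s_i\}}$, $x_{\{v', s_j\}}$, and $x_{\{s_i, s_j\}}$ in an arbitrary integer feasible solution, reading $x_{\{s_i, s_j\}}$ as $0$ whenever that chord is absent from $G'$. The two structural facts I will rely on are (i) constraint (4), which forces every Steiner node $k \in V \backslash Q$ with $x_{\{v', k\}} = 1$ to be a leaf of the spanning tree $T'$, adjacent only to $v'$; and (ii) acyclicity of $T'$, which bounds the number of selected edges of any elementary cycle in $G'$. I will write the cycle as $v', s_i, w_1, \dots, w_k, s_j, v'$ with $k \geq 1$ (using $|C| \geq 4$).

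First I would handle the easier case $x_{\{s_i, s_j\}} = 0$, where the goal reduces to exhibiting two distinct edges of $C$ whose values are forced to $0$. If $x_{\{v', s_i\}} = 1$, constraint (4) at $s_i$ pins $x_{\{s_i, w_1\}} = 0$, and symmetrically at $s_j$; combining with whichever of $x_{\{v', s_i\}}, x_{\{v', s_j\}}$ is itself $0$ quickly produces the two zeros. The hypothesis $|C| \geq 4$ is used here to ensure the forced zeros at $s_i$ and $s_j$ remain distinct when both $v'$-incident edges are selected.

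The more substantial case, which also explains the chord term on the right-hand side, is $x_{\{s_i, s_j\}} = 1$. Then constraint (4), applied to $s_i$ and $s_j$ --- now adjacent in $T'$ to a node other than $v'$ --- forces $x_{\{v', s_i\}} = x_{\{v', s_j\}} = 0$, accounting for two zeros. The remaining $|C| - 2$ edges of $C$ form an elementary $s_i$-to-$s_j$ path in $G$, which together with the chord $\{s_i, s_j\}$ forms an elementary cycle of length $|C| - 1$ in $G'$; acyclicity of $T'$ then permits at most $|C| - 3$ of these path edges to be selected, yielding the required $\sum_{e \in C} x_e \leq |C| - 3 = |C| - 2 - x_{\{s_i, s_j\}}$.

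I expect the main obstacle to be not in the combinatorics but in the bookkeeping: carefully pinpointing, in each of the five surviving subcases, which two edges of $C$ are forced to zero and by which constraint. The cleanest way to assemble these observations is probably to present the argument as a small case table keyed on the triple $(x_{\{v', s_i\}}, x_{\{v', s_j\}}, x_{\{s_i, s_j\}})$, noting that constraint (4) rules out the three triples in which $x_{\{s_i, s_j\}} = 1$ coexists with a nonzero $v'$-incident edge, and verifying that the auxiliary cycle of length $|C|-1$ used in the chord case is genuinely elementary, which follows from $C$ being elementary together with the standing hypothesis $\{s_i, s_j\} \notin C$.
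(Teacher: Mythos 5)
Your proof is correct, and it reorganizes the argument in a way that differs noticeably from the paper's. The paper fixes the cycle $C$ and reasons on the cardinality $|C\cap T'|$ in stages: acyclicity gives $\sum_{e\in C}x_e\le |C|-1$, the degree constraint (4) at $s_i$ and $s_j$ rules out the value $|C|-1$, and then a case discussion about \emph{which} two edges of $C$ are absent when $|C\cap T'|=|C|-2$ shows the chord $\{s_i,s_j\}$ must then be unused. You instead key the case split on the value of the chord variable itself. In the $x_{\{s_i,s_j\}}=0$ branch your exhibition of two forced-zero edges of $C$ via constraint (4) is the same ingredient as the paper's second bullet, just phrased edge-by-edge rather than by counting. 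The real divergence is the $x_{\{s_i,s_j\}}=1$ branch: the paper argues about which edges of $C$ are left out of $T'$ (a passage that is somewhat delicate --- it first asserts both $\{v',s_i\}$ and $\{v',s_j\}$ are excluded and then speaks of ``exactly one'' of them being excluded), whereas you apply acyclicity to the auxiliary elementary cycle formed by the $s_i$--$s_j$ path inside $C$ together with the chord, which has length $|C|-1$ and immediately yields $\sum_{e\in C}x_e\le |C|-3$. Your version buys a cleaner and more easily checkable justification of the chord term; the paper's version buys the intermediate bound $\sum_{e\in C}x_e\le|C|-2$ as a standalone statement and the observation that the inequality is tight for some integer solutions, which your validity-only argument does not need but which the paper uses to motivate the cut.
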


\begin{proof}
We show that constraints (10) would be satisfied by any Steiner tree in $G$. Let $T'$ be a spanning tree in $G'$ that corresponds to a Steiner tree in $G$, that is to say that every node in $V \backslash Q$ that is adjacent to $v'$ is of degree one in $T'$. We consider $C$ to be a cycle of length $\left\vert{C}\right\vert \geq 4$ in $G'$ that contains two edges $\{v',s_i\}$ and $\{v',s_j\}$, with $s_i, s_j \in V \backslash Q$. \\

Since $T'$ is a spanning tree in $G'$, it cannot contain any cycle, thus the number of edges in $C \cap T'$ is at most $\left\vert{C}\right\vert -1$. Therefore $\sum \limits_{e \in C} x_e \leq \left\vert{C}\right\vert -1$ holds. Moreover, the degree constraint on Steiner nodes in  $V \backslash Q$ imposes that the degrees of $s_i$ and $s_j$ be equal to one in $T'$, which has two consequences:
\begin{itemize}
\item If $\left\vert{C}\right\vert \geq 4$, the number of edges in $C \cap T'$ cannot be equal to $\left\vert{C}\right\vert -1$, because that would imply that one node among $s_i$ and $s_j$ has a degree at least equal to two in $T'$. Thus $\sum \limits_{e \in C} x_e \leq \left\vert{C}\right\vert -2$ holds.\\

\item If the number of edges in $C \cap T'$ was equal to $\left\vert{C}\right\vert -2$, we show that edge $\{s_i,s_j\}$ would not be part of $T'$, i.e. $x_{\{s_1,s_2\}}=0$. Firstly, edge $\{s_i,s_j\}$ being part of $T'$ would imply that edges $\{v',s_i\}$ and $\{v',s_j\}$ are not part of $T'$, otherwise $T'$ would contain triangle $v' s_i s_j v'$. Thus if edge $\{s_i,s_j\}$ is part of $T'$, then exactly one edge among $\{v',s_i\}$ and $\{v',s_j\}$, and exactly one edge among the two edges in $C\cap  (\delta(s_i) \backslash \{v',s_i\}\cup \delta(s_j) \backslash \{v',s_j\})$ (i.e. the edge that comes after $\{v',s_i\}$ or the edge that comes after $\{v',s_j\}$ when going through cycle $C$ from node $v'$) are left out in the construction of $T'$, and these two edges cannot be incident to the same node. Without loss of generality let us consider that the number of edges in $C \cap T'$ is equal to $\left\vert{C}\right\vert -2$, and that the two edges in $C$ that are not part of $T'$ are $\{v',s_i\}$ and an edge $e_j \in C\cap (\delta(s_j) \backslash \{v',s_j\})$. If edge $\{s_i,s_j\}$ was part of $T'$ then node $s_i$ would have a degree at least equal to two, which violates the degree constraint on this node. Therefore, inequality $ x_{\{s_i,s_j\}} \leq \left\vert{C}\right\vert -2 - \sum \limits_{e \in C} x_e$ forcing $ x_{\{s_i,s_j\}}$ to take value 0 if the number of edges in $C \cap T'$ equals $\left\vert{C}\right\vert -2$, holds, which is an equivalent way to state inequality (10). 
\end{itemize}
Therefore, constraint (10) is satisfied by any Steiner tree in $G$. The previous example additionally, showed that inequality (10) is not satisfied by some fractional solutions that would otherwise satisfy constraints (2) to (6), and that this inequality is tightly satisfied by some solutions corresponding to Steiner trees. Thus constraint (10) is a valid inequality for the MLST problem. 
\end{proof}

In the case of triangles of the form $v' s_i s_j v'$, we should mention that constraints of type (10) would exclude integer solutions where $x_{\{v',s_i\}}=x_{\{s_j, v'\}}=1$ and $x_{\{s_i,s_j\}}=0$, which correspond to Steiner nodes $s_i$ and $s_j$ not being used in the corresponding tree. Indeed, in this case the number of edges in $C \cap T'$ can be equal to $\left\vert{C}\right\vert -1=2$. The following constraint can however be stated:

\begin{alignat}{3}
\frac{x_{\{v',s_i\}} + x_{\{v',s_j\}}}{2} \leq 1 - x_{\{s_i,s_j\}} \quad & \forall C \mbox{ a triangle }  v' s_i s_j v': s_i, s_j \in V \backslash Q  
\end{alignat}

This constraint, whose validity is easy to verify, is an adaptation of constraints (10) to triangles. It imposes that if edges $\{s_i,s_j\}$ is part of a Steiner tree $T'$, then edges $\{v',s_i\}$ and $\{v',s_j\}$ cannot be part of it.\\

Constraints (10) and (11) can be used in the crossover operation. Let $X_1$ and $X_2$ be two fractional solution vectors corresponding to super-optimal solutions, the crossover operation is performed in the following two steps:

\subsubsection{Passing on good features}
The crossover operation first crosses the two sets of colors used by edges incident to each terminal $t_i\in Q$, whose corresponding edge variables take value 1, as well as the two sets of such edges.\\

It should be noted that this terminal-by-terminal crossover is a different operation from only crossing the sets of colors used by common edges or crossing the sets of all colors used in each solution, as illustrated in Figure \ref{example4}, representing two subsets of arcs whose value is 1 in two super-optimal solutions over the same sub-graph. This sub-graph contains two terminals $\{t_1, t_2\}$ and four Steiner nodes $\{s_1, s_2, s_3, s_4\}$, and edges in these two solutions are colored in \textit{red} (R), \textit{green}, \textit{blue} (B) or \textit{yellow} (Y).\\

Crossing these two solutions leads to the yellow color being passed on to the offspring, as edge $\{t_1, s_3\}$ uses this color in both solutions, to the blue color being passed on to the offspring, although it is used in two different edges incident to  $s_1$, but not the red color as it is used in edges adjacent to two different terminals, and obviously not the green color as it is used in only one of the two solutions. Thus, crossing these two solutions would lead to the values of variables $y_l , l \in \{\mbox{B}, \mbox{Y}\}$, as well as that of edge variable corresponding to $\{t_1, s_3\}$ being set to 1 in the offspring. 

\begin{figure}[!htb]
\begin{center}

\caption{\label{example4} An illustration for the first phase of the crossover operation}
\includegraphics[width=0.7\linewidth]{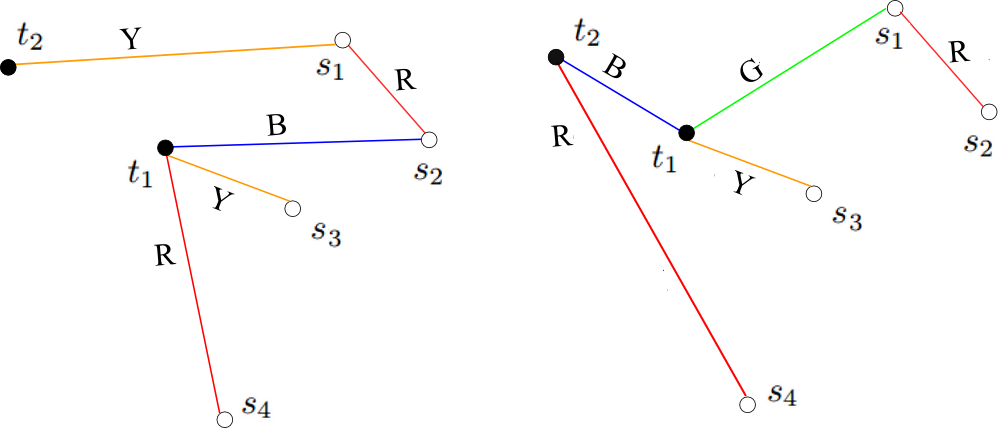}
\end{center}
\end{figure}

Formally we consider the following set of edge variables for each terminal $t_i \in Q$: 
$Z_i=\{e \in  \delta(t_i): y_{l(e)}=1, \mbox{ in both }\ X_1 \mbox{ and } X_2\}$ and set $y_{l(e)}=1, \forall e\in Z_i$ as well as $x_e=1, \forall e\in Z_i: x_e=1, \mbox{ in both }\ X_1 \mbox{ and } X_2$. \\

The intuitive idea of this procedure is to pass on the common colors and edges that are used to connect each terminal, in solutions $X_1$ and $X_2$ to their offspring. The second step of the crossover procedure aims at the progressive removal of fractional-valued variables.
\subsubsection{Cutting off bad features}
In this second step, we consider edge variables that have a fractional value in $X_1$ and $X_2$, specifically those corresponding to edges that are incident to one or two Steiner nodes, and impose a constraint of type (10) or (11) over a cycle containing each one of them. This procedure can be formally stated as follows:
$\forall e \in  E': x_e \mbox{ is fractional in }\ X_1 \mbox{ or } X_2\}$:
\begin{itemize}
\item  If only one extremity of $e$ is a Steiner node, identify an elementary path between the terminal extremity of $e$ and another Steiner node, using a Depth First Search, and impose a constraint of type (10) over the non-triangle cycle passing through $v'$ thus constituted.
\item  If both extremities of $e$ are Steiner nodes, impose a constraint of type (11) over the triangle formed by $v'$ and the two extremities of $e$.
\end{itemize}
Once this procedure is performed, the resulting simplified linear program is solved and new solutions presenting the highest levels of fitness are subjected to the same procedure. Additionally, a mutation operator periodically selects a terminal and allows it to be connected using a previously unused color.

Given a graph $G' = (V',E',L)$, and two fractional solutions $X_1$ and $X_2$, the crossover operation, that would generate an offspring solution $X_3$, can be summarized by Algorithm \ref{cross}. 
\begin{algorithm}[t]

 \KwData{$G' = (V',E',L)$, $X_1$, $X_2$ }
 \KwResult{$X_3$}
\For {all   $t_i \in Q$}
{Define $Z_i=\{e \in  \delta(t_i): y_{l(e)}=1, \mbox{ in both } X_1 \mbox{ and } X_2\}$\\
\For{all $e\in Z_i$ }
{
\If{$x_e=1, \mbox{ in both } X_1 \mbox{ and } X_2$}
{Set $x_e=1$ in the relaxed linear program\\
Set $y_{l(e)}=1$ in the relaxed linear program\\
}
}
  }
\For {all   $e \in  E'$}
{\If{$x_e \mbox{ is fractional in } X_1 \mbox{ or } X_2$}
{Define $v_1, v_2=\mbox{ the two extremities of } e$\\
\If{$v_1 \in V \backslash Q$ and $v_2 \in V \backslash Q$ }
{Add constraint $\frac{x_{\{v',v_1\}} + x_{\{v',v_2\}}}{2} \leq 1 - x_{\{v_1,v_2\}}$ to the relaxed linear program\\ 
\textbf{else} \If{$v_1 \in V \backslash Q$ and $v_2 \in Q$}
{\For {all   $s \in  Q\backslash v_2$}
{Define cycles $C=\{v',s\}\cup e\cup DFS(v_1,s)\cup \{v',v_2\}$\\
Add constraint $\sum \limits_{a \in C} x_a \leq \left\vert{C}\right\vert -2 - x_{\{v_1,v_2\}}$ to the relaxed linear program

}

}
}
}
}

Define $X_3=$ Optimal solution of the relaxed linear program\\

 \caption{\label{cross}Crossover procedure}
\end{algorithm}
\subsection{Illustrative example}
To illustrate the functioning of the proposed algorithm, let us consider the graph provided in Figure \ref{toy}, which contains three terminals denoted $\{t_1, t_2, t_3\}$ and three Steiner nodes denoted $\{s_1, s_2, s_3\}$, in addition to node $v'$ that is connected to all Steiner nodes and to terminal $t_2$ using colorless edges (labeled C), as per our formulation. Other edges in the graph can be colored in \textit{red} (R), \textit{green} (G), \textit{blue} (B) or \textit{yellow} (Y). Figure \ref{toy1} and Figure \ref{toy2}, in which edges are labeled according to the non-zero values of their corresponding edge variables in the linear program and omitted if those variables take a zero value, represent two super-optimal fractional solutions generated by solving the relaxed linear program corresponding to this graph. We respectively denote these two solutions $X_1$ and $X_2$.
One can observe that $X_1$ and $X_2$ are both of value $1.5$ and exhibit the typical structure of a super-optimal fractional solution in our formulation. Indeed, some edges are selected and their corresponding label variable   $y_l, l \in L$ take value 1 (e.g. the green color in our two super-optimal solutions), While other edges take a fractional value and thus their corresponding label variables $y_l, l \in L$ take a fractional value. This is the case for the red color in the super-optimal solution in Figure \ref{toy1}, and for the blue color in the super-optimal solution in Figure \ref{toy2}. The goal of the crossover procedure is, simply put, to exploit this common structure for super-optimal solutions, by passing on a subset of the former type of edges, while cutting off the latter type of edges. Thus, as per Algorithm \ref{cross}, crossing $X_1$ and $X_2$ leads to selecting common edges $\{t_1, t_2\}$ and $\{t_3, s_2\}$ that are incident to at least one terminal. Figure \ref{toy3} represent the selected edges after this phase of the crossing procedure is performed. Furthermore, a depth-first search from edge $\{s_3,t_3\}$, whose corresponding edge variable has a fractional value in $X_1$ identifies the cycle $C_1=v' s_3 t_3 s_2 v'$ of length 4. A similar search from edge $\{s_1,t_3\}$, whose corresponding edge variable has a fractional value in $X_2$, identifies the cycle $C_2=v' s_1 t_3 s_2 v'$, also of length 4. Thus, the two following inequalities of type (10) are to be imposed respectively over $C_1$ and $C_2$, in the relaxed linear program:
$$x_{\{v',s_3\}} + x_{\{s_3,t_3\}} + x_{\{t_3,s_2\}} + x_{\{s_2,v'\}}  \leq 4 -2 - x_{\{s_3,s_2\}} $$
$$x_{\{v',s_1\}} + x_{\{s_1,t_3\}} + x_{\{t_3,s_2\}} + x_{\{s_2,v'\}}  \leq 4 -2 - x_{\{s_1,s_2\}} $$
It is easy to observe that solution $X_1$ does not satisfy the first inequality (1.5 $\not\leq$ 1), while solution $X_2$ does not satisfy the second inequality (2 $\not\leq$ 1.5). Therefore, these two super-optimal fractional solutions would be cut off by these the two constraints. 

Finally, solving the relaxed linear program under these conditions generates the integer solution $X_3$ represented in Figure \ref{toy4}, which for our example, constitutes an optimal Steiner tree of value 2.
\begin{figure}[!htb]
\minipage{0.32\textwidth}
\caption{\label{toy} An illustrative graph with three terminals and three Steiner nodes}
\includegraphics[width=\linewidth]{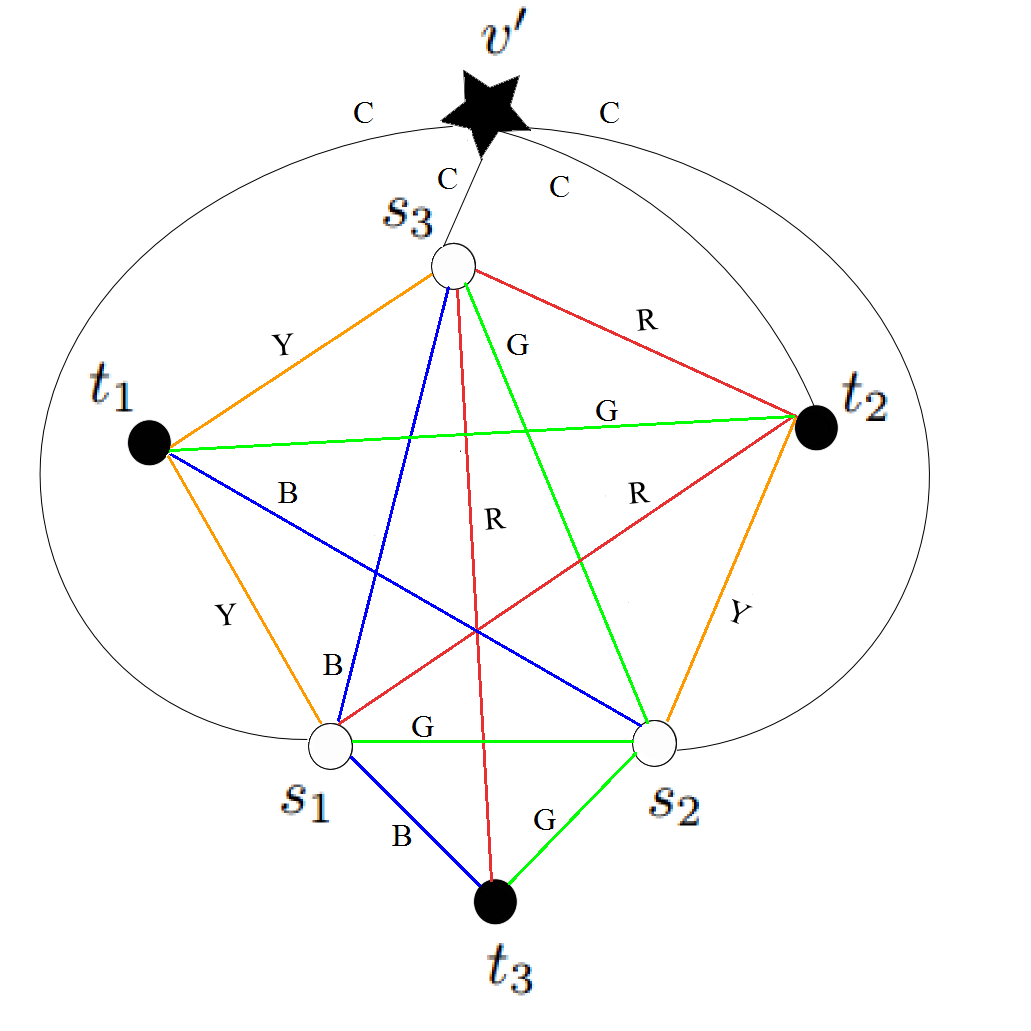}
\endminipage\hfill
\minipage{0.32\textwidth}
\caption{\label{toy1}First super-optimal fractional solution ($X_1$)} 
\includegraphics[width=\linewidth]{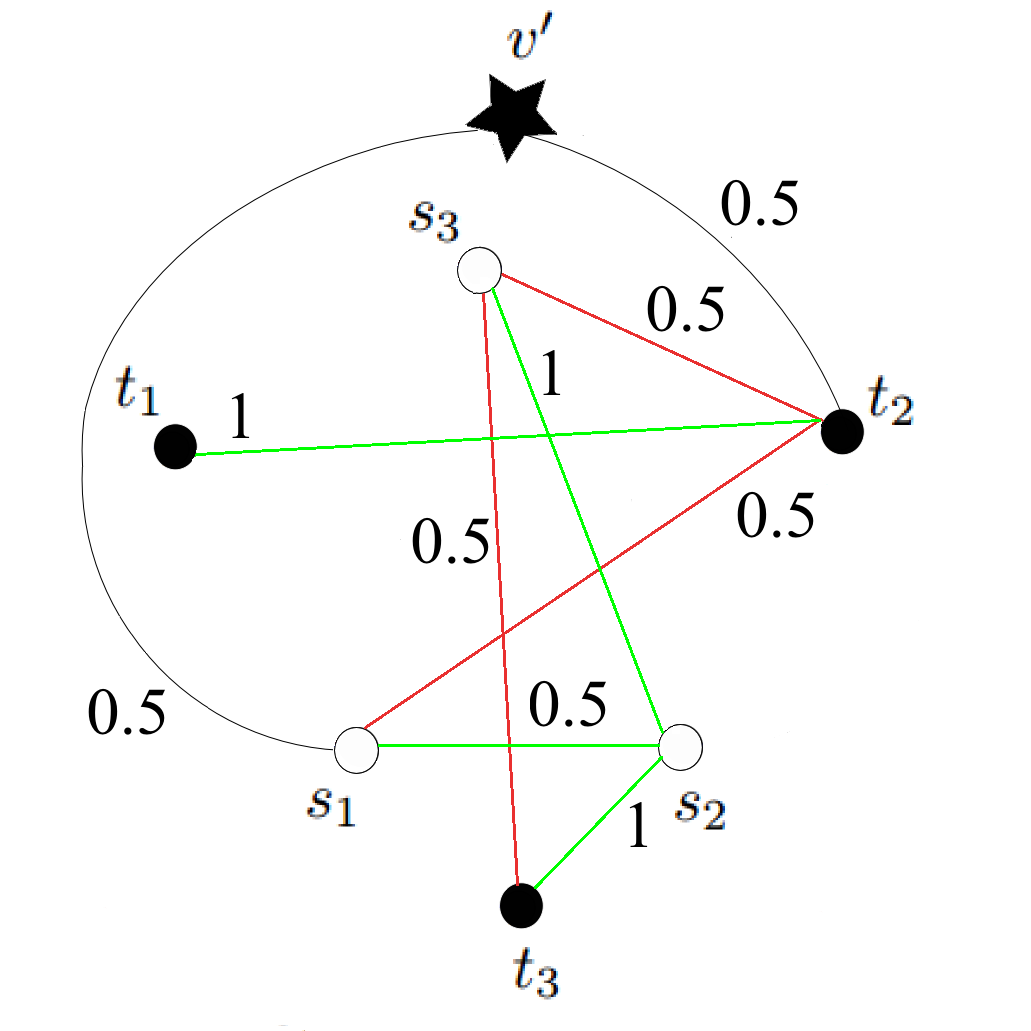}

\endminipage\hfill
\minipage{0.32\textwidth}%
\caption{\label{toy2}Second super-optimal fractional solution ($X_2$)} 
\includegraphics[width=\linewidth]{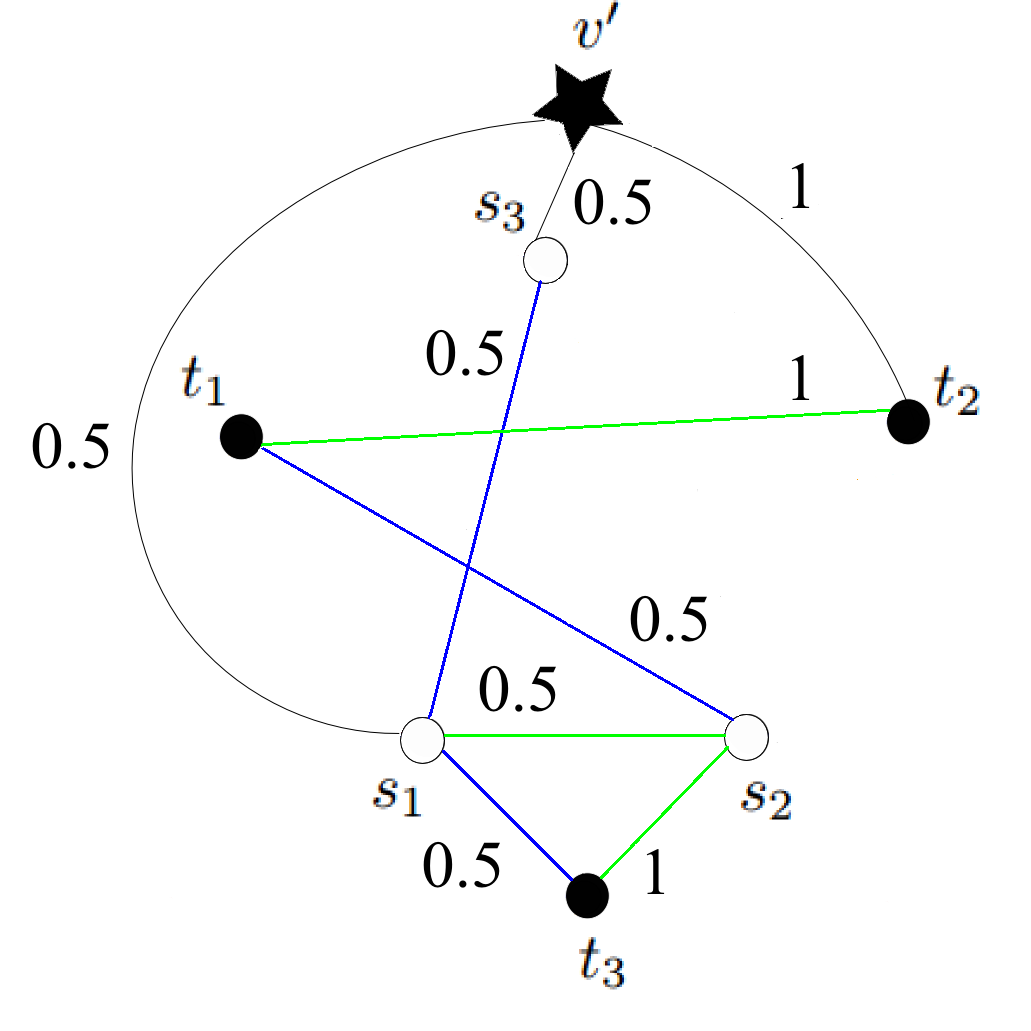}
\endminipage
\end{figure}
\begin{figure}[!htb]
\minipage{0.05\textwidth}
\hphantom
\endminipage\hfill
\minipage{0.32\textwidth}%
\caption{\label{toy3} Edges selected by crossing the two super-optimal solutions}
\includegraphics[width=\linewidth]{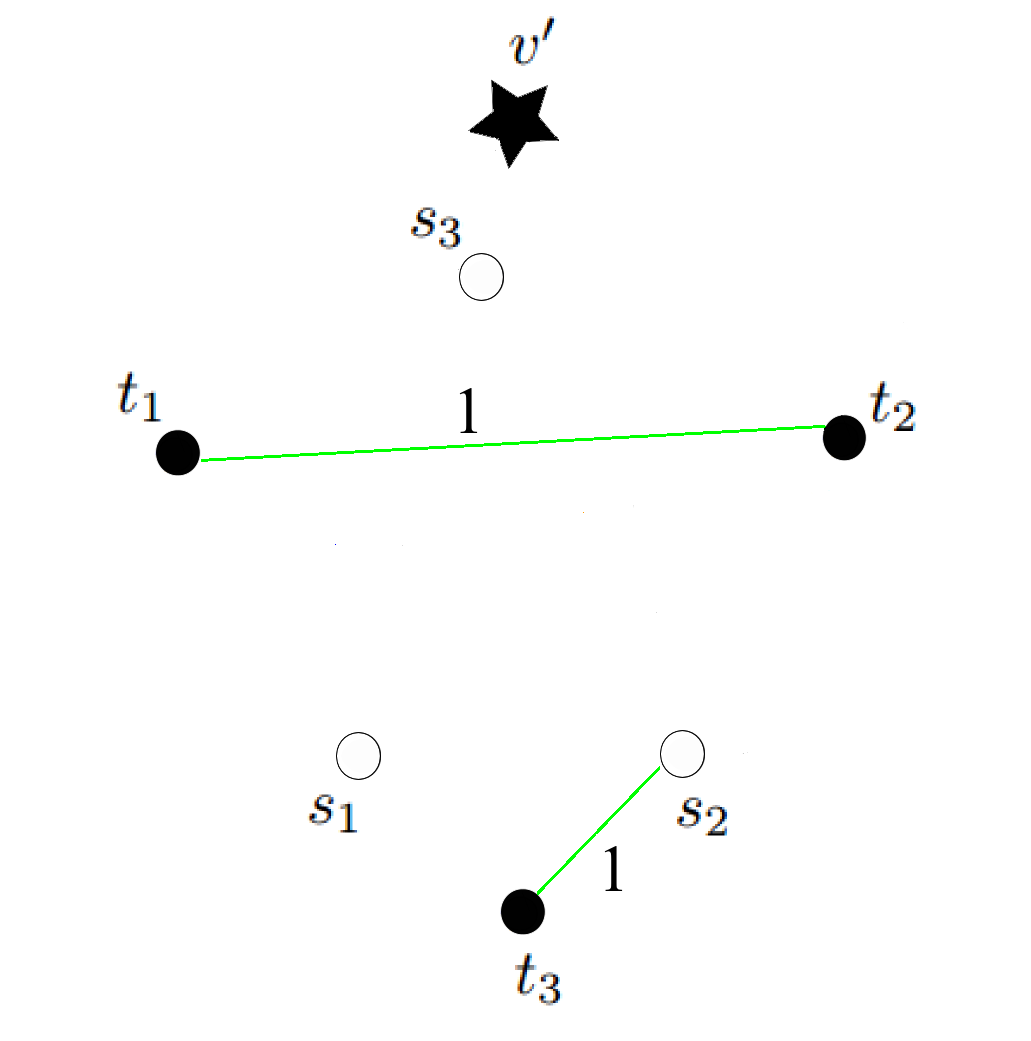}

\endminipage\hfill
\minipage{0.32\textwidth}%
\caption{\label{toy4} Optimal Steiner tree generated through the crossover operation ($X_3$)} 

\includegraphics[width=\linewidth]{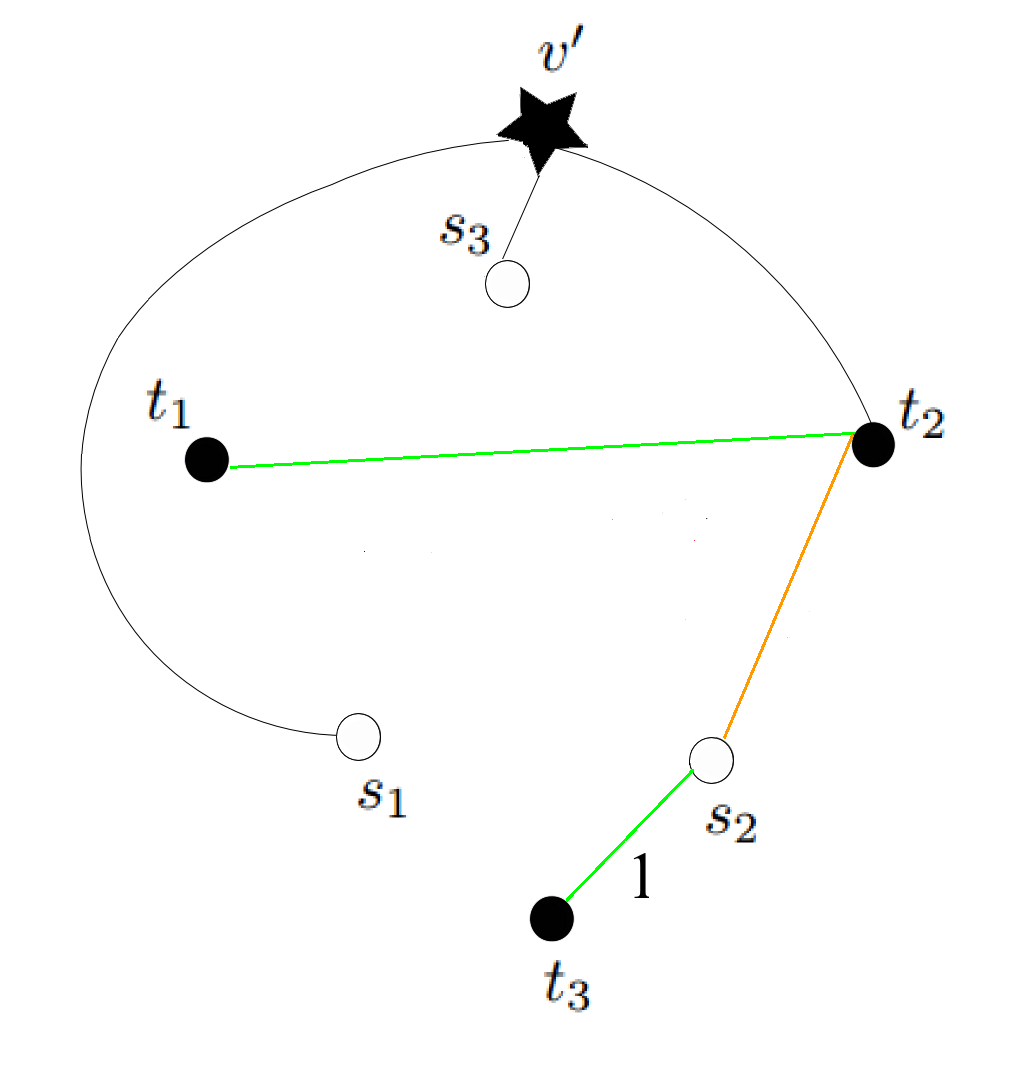}
\endminipage\hfill
\minipage{0.05\textwidth}
\hphantom
\endminipage
\end{figure}

\section{Application and experimental results}
\label{computation}
\subsection{Test instances}
We have conducted preliminary experiments in which different C++ implementations of the proposed generic algorithmic approach for the MLST problem were compared in terms of solution quality and computational running time. The best-performing implementation of the devolutionary genetic algorithm (DGA) in our experiment was compared to an exact branch and bound method (BB) and to the Pilot Method (PM). We considered $20$ different randomly generated data-sets, each containing $5$ instances of the problem, resulting in a total of a $100$ instances, with $n \in \{50,\dots,200\}$ nodes, a number of basic nodes $|Q|=0.25\cdot n$, values of $m$ derived from densities $ d \in \{0.25, 0.5, 0.8\}$, and a number of colors $c \in \{0.25\cdot n, 0.50\cdot n, 1.00\cdot n\}$. The three algorithms (DGA, BB and PM) were ran once for each instance. All computations have been conducted on an Intel Core i7 processor at $8 \times 2.20 GHz$ with $8 Gb$  Ram. Computation duration for PM and DGA were calculated by recording the time at which the best solution found was first discovered, notwithstanding the additional time in which the algorithms ran without improvement to this solution. The GNU Linear Programming Package (GLPK) was used for solving integer linear programs and their relaxations.\\ 

It should be noted that a comparison with an established evolutionary genetic algorithm for the problem at hand would have been informative as well. However, no such method has been previously reported in the literature, to the best of our knowledge. Moreover, developing an ad-hoc evolutionary genetic algorithm in this work, in addition to being a challenging task in its own right, given the sensitivity of solving the MLST problem to the initial random setting of labels, as reported in \cite{1}, would have defeated the purpose of this experiment, which was to test the performances of the devolutionary approach by comparison with established methods for solving the MLST problem.    
\subsubsection{Comparison results}
\begin{table}[!htb]
      \centering
    \begin{minipage}{.4\linewidth}
  
    \centering
 \caption{\label{res1} Average objective function value for $|Q|=0.25\cdot n$}
        \begin{tabular}{|c|c|c||c|c|c|}
        \hline
       $n$& $d$ & $c$ & PM & DGA & BB\\
      \hline
          &   & $12$ &2.6  & 2.8 & 2.6\\
        & 0.25& $25$ &2.8 & 3 & 2.8\\
         &    & $50$ & 3.1 & 3.3 & 3.1\\
                   \cline{2-6}
          &  & $12$ & 1.5 & 1.25 &1.25 \\
  50  & 0.50& $25$ & 1.7 & 1.41 & 1.4\\
         &    & $50$ &2.2  & 2.2 &2.2 \\
             \cline{2-6}
         &    & $12$ & 1.4   & 1.2 &1.2 \\
        & 0.80& $25$ & 1.35 & 1.35 & 1.35\\
        &     & $50$ & 1.3 & 1.3 & 1.3\\
             \hline   
             
                &    & $25$ & 9.6   & 9.2 &8.5 \\
        & 0.25& $50$ & 10.65 & 9.0 & 8.0\\
        &     & $100$ & 12.3 & 10.5 & 9.2\\

                   \cline{2-6}
  &   & $25$ &8.4  & 8.0 & 8.0\\
 100    & 0.50 &$50$ &8.9 & 8.5 & 7.8\\
         &    & $100$ & 10.3 & 9.3 & 9\\
             \cline{2-6}
         &  & $25$ & 8.0 & 6.2 &7.5 \\
   & $0.80$ &$50$ & 8.7 & 7.4 & 7.4\\
         &    & $100$ &9.2  & 9.2 &8.2 \\
             \hline   
      &    & $50$ & 11.6   & 11.2 &11.0 \\
        & 0.25& $100$ & 18.2 & 13.8 & 12.35\\
        &     & $200$ & 20.4 & 19.0 & 17.3\\
       
                   \cline{2-6}
          &  & $50$ & 10.5 & 10.2 &8.4 \\
  200  & 0.50& $100$ & 12.6 & 11.9 & 10.0\\
         &    & $200$ &16.8  &15.0 &14.6 \\
             \cline{2-6}

          &   & $50$ &7.6  & 7.8 & 7.8\\
        & 0.80& $100$ &8.8 & 8.3 & 8.5\\
         &    & $200$ & 9.1 & 9.1 & 9.0\\
             \hline   
              
        \end{tabular}

    \end{minipage}%
    \hspace{.1\linewidth}
    \begin{minipage}{.4\linewidth}
      \centering
       \caption{\label{res2} Average computation duration (in seconds) for  $|Q|=0.25\cdot n$}
        
       \begin{tabular}{|c|c|c||c|c|c|}
        \hline
    $n$& $d$ & $c$ &  PM & DGA &BB\\
       \hline
         &    & $50$   & 2 & 3&4.8\\
        & 0.25& $100$  & 3 & 3.5&6.3\\
         &    & $200$  & 5 & 8.2&12\\
          \cline{2-6}
         &   & $50$  & 1.7 &3 & 3.5\\
  50      & 0.50& $100$  & 2.3 & 3.2& 4.3\\
        &     & $200$   & 5.5 &7.5&9 \\
        \cline{2-6}
        &     & $50$    & 1.5 &1.5 & 2.4\\
        & 0.80& $100$  & 2 & 3.0& 3.2\\
        &     & $200$  & 3.2 & 3.5& 5\\
             \hline
             
         &     & $50$    & 21.5 &112.7 & 162.7\\
        & 0.25& $100$  & 18 & 136.1& 229.2\\
        &     & $200$  & 27 & 173.3& 300.5\\

          \cline{2-6}
         &   & $50$  & 5.6 &7.6& 13.5 \\
100        & 0.50& $100$  & 11.3 & 36.2& 56.3\\
        &     & $200$   & 15.2 &53.5&89 \\
        \cline{2-6}
           &    & $50$   & 4.4 &4.8 &9\\
       & 0.80& $100$  &6.3 & 7.2&13.5\\
         &    & $200$  & 11.1 & 13.4&20.9\\
             \hline

             &    & $50$   & 32.3 & 132.6&400.8\\
      & 0.25& $100$  & 40.7 & 142.6&1014.0\\
         &    & $200$  & 51.7 & 260.2&Unknown\\
          \cline{2-6}
         &   & $50$  & 12.6 &17.3 & 153.5\\
200        & 0.50& $100$  & 15.5 & 14.5& 204.9\\
        &     & $200$   & 15.8 &19.1 &314.0\\
        \cline{2-6}
        &     & $50$    & 11.3 &11.9 & 52.5\\
        & 0.80& $100$  & 14.2 & 15.1& 68.9\\
        &     & $200$  & 15.2 & 17.5& 112.8\\
             \hline
          
        \end{tabular}
       
    \end{minipage} 
\end{table}

 Tables \ref{res1} and \ref{res2} present a summary of the computational results we have obtained. When comparing the results of DGA with those of the PM, we can conclude that the former consistently finds better solutions, with close running times for high-density graphs. The Wilcoxon rank sum tests yield error probabilities of less than 1\% for the hypothesis that the average objective values from DGA are smaller. However, the devolutionary genetic algorithm although, it still finds higher quality solutions seems to under-perform with networks of a low density and requires high computation times that are closer to those of an exact solving approach (BB). This can be explained by the fact that the linear programming formulation we have used does not produce a very tight relaxation for such graph. Thus, We can globally conclude that the proposed hybrid meta-heuristic approach presents a good compromise between a heuristic and an exact approach, although, its use is not indicated for low-density graphs.  
\subsubsection{Sensitivity Analysis}
Generating the initial population of super-optimal solutions is unsurprisingly the phase of the algorithm that requires the biggest computational effort. The size of the initial population and the tightness of the relaxation are the aspect that seem to have the most influence both on the quality of the feasible solutions reached and on running time. Only the former of these two aspects being controllable, the primary adjustment parameter for DGA is thus the size of the initial population of super-optimal solutions. In Figure \ref{size}, we study the influence of the number of initial super-optimal solutions on the quality of generated solutions and on running time for the same set of a $100$ instances. In this graph, the X axis represents the number of initial solutions, while the Y axis represents the average percentage of variation in the value of the best generated solution, compared to the optimum generated by BB, as well as the average percentage of variation in computation time, compared to the computation time when starting with two super-optimal solutions. 

\begin{figure}[!htb]
\begin{center}
\caption{\label{size} Influence of the size of the initial population on value and computation time (averages for $100$ instances)} 
\includegraphics[width=0.75\linewidth]{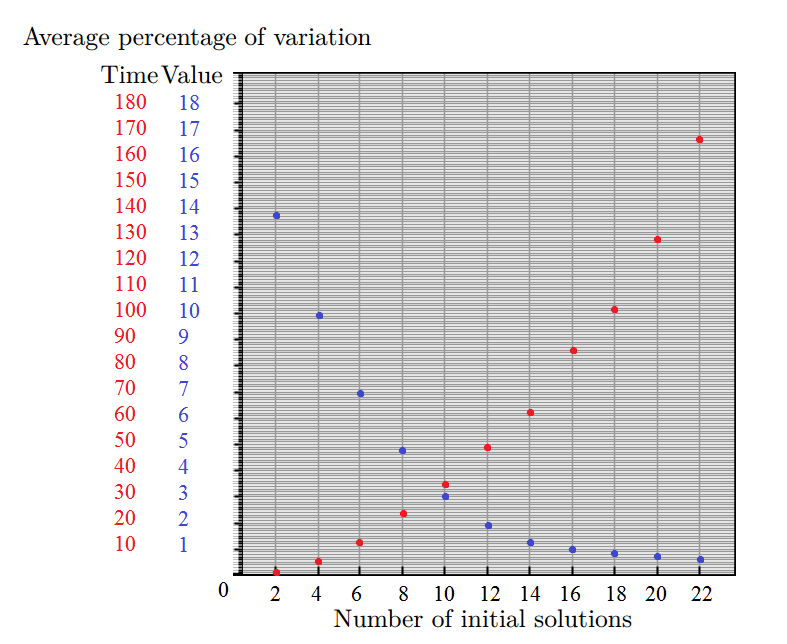}
\end{center}
\end{figure}

We can observe that if this size is too small, then the form of the resulting solutions is relatively restricted from the get-go leading to fast convergence but with a higher probability of convergence to a local optimum. If, on the other hand, the size is too large, then a disproportionate amount of computational time is required. We have observed that increasing the initial population size beyond a dozen, although increasing the computational effort does not significantly improve the quality of the feasible solutions that are eventually generated.

\section{Conclusion}
\label{conclusion}
The preliminary experiments we have performed support the use of devolutionary algorithms for the MLST problem and their development for other NP-hard combinatorial optimization problems. Ongoing investigation will consist in evaluating their results for larger instances with low densities. A comparison of different linear programming formulations of the MLST problem, such as the work done in \cite{15} for the Steiner tree problem, and in \cite{16} for the minimum labeling spanning problem, is outside the scope of the current research. However, such a study would certainly have been profitable to the design of hybrid meta-heuristic approaches like the one we propose in this paper, in addition to its obvious usefulness for computing lower-bounds to the problem in exact solving approaches.

\end{document}